\newtheorem{thm}{Theorem}
\newenvironment{proof}[1][] {\noindent {\bf Proof#1:} }{\hspace*{\fill}$\square$\medskip\par}
\def\E{{\mathbb E}\,}
\def\NN{{\mathcal N}}
\def\P{{\mathbb P}}
\def\SS{{\mathcal S}}
\def\MR{{\mathcal M}_R}
\title{On the maximum of a special random assignment process}
\author{
M.Lifshits\footnote{St.Petersburg State University. Postal address: Russia, 191023, St.Petersburg, Universitetskaia emb. 7/9. Emails: \texttt{mikhail@lifshits.org}, \texttt{tadevosiaan@yandex.ru}}, 
A.Tadevosian$^{*}$
}
\date{}
\begin{document}

\maketitle

\begin{abstract}
We consider the asymptotic behavior of the expectation of the maximum for a special assignment process
with constant or i.i.d. coefficients. We show how it depends on the coefficients' distribution.
\end{abstract}
\bigskip

{\bf MSC classification:}\ 60C05 (Primary), 05C70, 60K30 (Secondary)


\section{Introduction: the assignment problem}
Let $(w_{ij})_{1\le i,j\le n}$ be a matrix with non-negative entries, often called a cost matrix. 
For every $n$-permutation $\pi$ define a variable 
\[
    R_\pi = \sum_{i=1}^{n}w_{i\pi(i)}.
\]
The assignment problem (bipartite matching) consists in the study of 
$\max_{\pi\in\SS_n} R_\pi$ or $\min_{\pi\in\SS_n} R_\pi$. Here and elsewhere throughout the article
$\SS_n$ stands for the permutation group of degree $n$.

If the matrix $w$ is random, one deals with a random assignment process $(R_\pi)_{\pi\in\SS_n}$.

In general, when the random assignment problem is considered, one assumes that $(w_{ij})$ are independent identically distributed random variables and the dependence of the solution of their common distribution
and of the matrix size is studied, especially, for the case when the latter goes to infinity.

We recall here a number of the most important known results:
\begin{itemize}
    \item Uniform case, $U[0, 1]$, M\'ezard and Parisi \cite{ RandomLinkMatching}:
\[
    \E{\min\limits_{\pi\in \SS_n} R_\pi} = \zeta(2) -\frac{\zeta(2)/2 + 2\zeta(3)}{n} + O\left(\frac{1}{n^2}\right), \text{ as } n\to \infty,
\]
    \item Exponential case,  Exp(1), Aldous \cite{AldousZet2}:
\[
    \E{\min\limits_{\pi\in \SS_n} R_\pi} = \zeta(2) (1 + o(1)), \text{ as } n\to \infty,
\]
    \item Gaussian case, $\NN(0, 1)$, Mordant and Segers \cite{MordantSegers}, Lifshits and Tadevosian \cite{Lifshits2021GAP}:
\[
    \E{\max\limits_{\pi\in \SS_n} R_\pi} = n \sqrt{2 \log{n}} (1 + o(1)), \text{ as } n\to \infty.
\]
\end{itemize}

Recently, some rather general asymptotic results we obtained in \cite{ChengTkocz} and in \cite{Lifshits2022GenAP}.

Various extensions of the basic assignment problem described here are also available in the literature. 
It particular, one may consider for a rectangular cost matrix, see \cite{SorkinMN, BuckChan, CoppersmithSorkin, LinussonGeneralization, NairGeneralization, WaestlundEasyProof}. 
One can also define assignment problem as a combinatorial problem on more general graph structures such as spanning trees, Hamilton cycles, paths between two fixed vertices in complete $n$-graphs, see \cite{ChengTkocz}.
\medskip

The random assignment problem has a deep background and application in various fields of engineering and mathematics. The lines of past research vary from bipartite graph modeling of practical industrial 
problems to low-complexity algorithm design.  Bipartite matching problems emerge in auction
of goods, resource allocation in wireless networks and electronic vehicle charging, etc.

We recall just a few  real-world examples of problems that can be modeled by bipartite graphs. In  wireless communication scenarios, when a number of users are transmitting over a shared channel resource, we need to allocate the wireless channel as discrete resource blocks to the user population. The problem  of channel resource allocation in OFDMA 
(Orthogonal Frequency Division Multi-Access) system is modeled by Bai et al. \cite{Bai1} as a maximal matching of random bipartite graph. 
Later, Bai et al. \cite{Bai2} discussed the outage exponent for non-asymptotic sub-channel allocation problems, where the outage, i.e. failure in data transmission, happens when a user does not receive sufficient channel resource. 
A similar problem has been investigated by Chen et al. \cite{Chen3}, where a joint design of resource block allocation, and assignment of modulation and coding schemes is considered.

Aside from wireless communications, bipartite matching is proposed as a natural model for smart transportation. Ke et al. \cite{Ke} investigated ride sourcing as an online bipartite matching problem, and proposed a two-stage framework for optimization.

Based on the fundamental works of Shannon \cite{Shannon} and Hartley \cite{Hartley} in information and communication theory,
the applied mathematicians \cite{Bai1,Bai2,Chen3} came, some times implicitly, to a special form of the random cost matrix, namely, $w_{ij} = \log{(1+\gamma_{ij} h_{ij})}$, where $h_{ij}$ are i.i.d. standard exponential random variables, $\gamma_{ij} \geq 0$.  We call the family $(R_\pi)_{\pi\in\SS_n}$ for such matrices {\it a special} assignment process.

In this paper, we are interested in the behaviour of $\E{\max_{\pi \in \SS_n} R_\pi}$ for a  randomized special assignment process where $\gamma_{ij}$ are considered as i.i.d. non-negative random variables also independent of  the array $(h_{ij})$. This randomized setting has a reasonable practical interpretation.
For example it applies to the modeling of such telecom systems  where the spatial locations of user equipment are heterogeneous (e.g., the distances between users and the base station vary a lot).
Then the probability distribution of gamma can be interpreted as a continuum version of the collection of large-scale fading gains when the number of users is very large.



\section{A general result}

Let $\gamma$ be a generic random variable equidistributed with each $\gamma_{ij}$. Let $h$ be a standard exponential random variable
independent of $\gamma$. Finally, denote $w:=\log(1+\gamma h)$ a generic random variable equidistributed with each $w_{ij}$.
Our main tool is the Laplace transform
\[
    \Lambda(\rho):=\E \exp\left(-\rho \gamma^{-1}\right).
\]
\begin{thm} For $p\in (0,1)$ let g(p) be defined as a solution of equation
\begin{equation} \label{e:g}
   p= \Lambda(e^r-1)
\end{equation}
with respect to the variable $r$. If the function $p\mapsto g(p)$ is slowly varying at zero, then 
\begin{equation} \label{e:main}
   \E \MR \sim n\, g(1/n), \qquad \textrm{as } n\to \infty.
\end{equation}
\end{thm}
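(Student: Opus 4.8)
The plan is to first translate the statement into a question about the upper tail of the single-entry distribution. Writing $\bar F(r) = \P(w > r)$ and conditioning on $\gamma$, I would compute
\[
  \bar F(r) = \P(\gamma h > e^r - 1) = \E \exp\left(-(e^r-1)\gamma^{-1}\right) = \Lambda(e^r - 1),
\]
so that equation \eqref{e:g} says precisely $\bar F(g(p)) = p$; in other words $g$ is the upper quantile function of $w$, and $g(1/n)$ is the level at which the maximum of $n$ i.i.d.\ copies of $w$ typically sits. The target \eqref{e:main} is then the natural statement that the optimal assignment behaves like $n$ independent row-maxima, each of order $g(1/n)$. I would prove matching upper and lower bounds for $\E\MR / (n\, g(1/n))$ separately, using the slow variation of $g$ (equivalently, a super-polynomially fast decay of $\bar F$) in both directions.

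For the upper bound, fix $c = g(1/n)$ and relax the permutation constraint row by row: since $x \le c + (x-c)_+$,
\[
  \MR = \max_{\pi}\sum_i w_{i\pi(i)} \le nc + \sum_i \max_{1\le j \le n}(w_{ij}-c)_+ .
\]
Taking expectations and using that the rows are identically distributed gives $\E\MR \le nc + n\,\E(M_n - c)_+$, where $M_n = \max_{1\le j\le n} w_{1j}$. The union bound $\P(M_n > t)\le n\bar F(t)$, integrated via the layer-cake formula, yields $\E(M_n-c)_+ \le n\,\E(w-c)_+$, and the quantile representation of the truncated mean gives the exact identity
\[
  \E(w-c)_+ = \int_0^{1/n} g(u)\,du - \frac1n\, g(1/n).
\]
Here the slow variation enters through Karamata's theorem, which gives $\int_0^{1/n} g(u)\,du \sim \tfrac1n\, g(1/n)$, so that $n\,\E(w-c)_+ = o(g(1/n))$ and hence $n\,\E(M_n-c)_+ \le n^2\,\E(w-c)_+ = o(n\, g(1/n))$. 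Combining, $\E\MR \le n\, g(1/n)(1+o(1))$.

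For the lower bound I would work at a slightly lower level. Fix $\lambda > 0$ and set $c = g(\lambda/n)$, so that the events $\{w_{ij} > c\}$ are independent with probability $\lambda/n$ and define a random bipartite graph on rows and columns of mean degree $\lambda$. By the standard theory of matchings in sparse random bipartite graphs, for every $\eps>0$ there is a $\lambda$ for which this graph has, with probability tending to $1$, a matching covering at least $(1-\eps)n$ rows. Any such matching extends to a permutation $\pi$, and since $w\ge 0$ the unmatched entries only help, so on this high-probability event $\MR \ge (1-\eps)n\,c = (1-\eps)n\,g(\lambda/n)$. Because $w\ge 0$ guarantees $\MR\ge 0$ always, the high-probability bound passes to the expectation, giving $\E\MR \ge (1-\eps)n\,g(\lambda/n)(1-o(1))$. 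Slow variation of $g$ gives $g(\lambda/n)\sim g(1/n)$ for fixed $\lambda$, and letting first $n\to\infty$ and then $\eps\to0$ (i.e.\ $\lambda\to\infty$) yields $\E\MR \ge n\, g(1/n)(1-o(1))$.

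I expect the main obstacle to be the lower bound, and specifically the matching step: controlling the deficiency $\max_{S}(|S|-|N(S)|)$ of the random bipartite graph uniformly over all $2^n$ subsets $S$ of rows, so as to guarantee a matching of size $(1-\eps)n$ with high probability. A secondary technical point is to make the interchange of the limits in $n$ and $\lambda$ rigorous, which is exactly where the fixed-$\lambda$ form of slow variation, $g(\lambda/n)/g(1/n)\to 1$, is needed; taking $\lambda = \lambda_n \to\infty$ together with $n$ would require uniform, Potter-type control that I would prefer to avoid. The upper bound, by contrast, is essentially a one-line relaxation reduced to Karamata's theorem.
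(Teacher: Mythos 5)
Your proposal is correct in outline, and its first step --- computing $\P(w\ge r)=\P(\gamma h\ge e^r-1)=\E\exp(-(e^r-1)\gamma^{-1})=\Lambda(e^r-1)$ and recognizing $g$ as the tail quantile function of $w$ --- is in fact the \emph{entire} content of the paper's proof: having made that identification, the authors simply invoke the general i.i.d.\ result of \cite{Lifshits2022GenAP}, which states that $\E\MR\sim n\,g(1/n)$ whenever the tail quantile function is slowly varying. Everything after your first paragraph is therefore a re-derivation of that cited theorem rather than something the paper proves. Your upper bound (row-by-row relaxation $\MR\le nc+\sum_i\max_j(w_{ij}-c)_+$ with $c=g(1/n)$, union bound, the quantile identity $\E(w-c)_+=\int_0^{1/n}g(u)\,du-\tfrac1n g(1/n)$, and Karamata to kill the remainder) is sound, and your lower bound via a sparse random bipartite graph at level $c=g(\lambda/n)$ with a near-perfect matching is the standard route; you correctly isolate the two delicate points, namely the matching lemma for mean degree $\lambda$ (a known but nontrivial result that itself requires a citation, e.g.\ of Erd\H{o}s--R\'enyi or Karp--Sipser type) and the order of limits in $n$ and $\lambda$, which the fixed-$\lambda$ form of slow variation handles. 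The trade-off is clear: the paper's argument is two lines but rests entirely on an external theorem, while yours is self-contained modulo the matching lemma and makes visible exactly where slow variation is used (Karamata above, $g(\lambda/n)\sim g(1/n)$ below). Minor points to tidy if you write this out: the quantile identity needs $\bar F$ continuous and strictly decreasing near $c$ (true here since $h$ is exponential), and local integrability of $g$ at $0$ follows from slow variation, so Karamata applies without extra hypotheses.
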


\begin{proof}
By independence of $\gamma$ and $h$ we have
\[
   \P(\gamma h\ge \rho) = \P\left(h\ge \rho \gamma^{-1}\right) = \E \exp\left(-\rho \gamma^{-1}\right)
   = \Lambda(\rho).
\]
Hence,
\[
    \P( w \ge r) = \P(\log(1+\gamma h)\ge r) =  \P( \gamma h\ge e^r-1) =\Lambda(e^r-1).
\]
In other words, we found that $g$ is the {\it tail quantile function},
\[
   g(p) := \inf \big\{ r: \P( w \ge r)\le p \big\} =  \inf \big\{ r:  \Lambda(e^r-1)  \le p \big\}.
\]
Now we may apply the result of \cite{Lifshits2022GenAP} for the general i.i.d. case: as long as the tail quantile function is slowly varying, one has
\[
   \E \MR \sim n\, g(1/n), \qquad \textrm{as } n\to \infty.
\]
as required in the theorem's claim.
\end{proof}

\section{Some important special cases}\label{sec:special_cases}

\subsection{Constant $\gamma$}
Let $\gamma\equiv c$ be a constant. Then $\Lambda(\rho)=\exp(-\rho/c)$,
hence
\[
  \Lambda(e^r-1)=\exp\left(-(e^r-1)/c\right)
\]
and by solving equation
\[
  p = \exp\left(-(e^r-1)/c\right)
\]
in $r$ for given $p$ we find
\[
  g(p)=r = \log\left(1+c\,|\log p|\right) \sim \log |\log p|, \qquad \textrm{as } p\to 0.
\]
We obtain from \eqref{e:main} that
\begin{equation}\label{asymp_c}
   \E \MR \sim n\, \log\log n, \qquad \textrm{as } n\to \infty.
\end{equation}

\subsection{Exponential $\gamma$}
Let $\gamma$ be a standard exponential random variable. Then
\[
     \Lambda(\rho) = \int_0^\infty \exp\left(-\rho y^{-1} -y \right) dy
   \sim \rho^{1/4} \exp(-2\sqrt{\rho}) \sqrt{\pi}, \qquad \textrm{as } \rho\to\infty.
\]

Hence, $\log\Lambda(\rho)\sim -2 \sqrt{\rho}$, as $\rho\to\infty$. Solving equation
$\Lambda(e^r-1)=p$ or $-\log\Lambda(e^r-1)=|\log p|$,
we arrive at
\[
  2\sqrt{e^r-1} =|\log p| (1+o(1))
\]
or equivalently
\[
   e^r-1 = \frac{|\log p|^2}{4}\ (1+o(1)),
\]
which yields
\[
  g(p)=r=\log \left( \frac{|\log p|^2}{4}\right) +o(1) \sim 2\log|\log p|,
  \qquad\textrm{as } p\to 0.
\]
Finally, from \eqref{e:main} it follows that
\begin{equation}\label{asymp_e}
  \E \MR \sim 2 n \, \log\log n, \qquad \textrm{as } n \to \infty.
\end{equation}
which is two times larger than for the case of constant $\gamma$.

\subsection{Distributions of $\gamma$ with polynomial tails}

Let now $\gamma$ have a density $q$ such that
\[
   q(y) \sim a \, y^{-\alpha}, \qquad \textrm{as } y\to\infty,
\]
with $a>0$, $\alpha>1$. Then
\[
    \Lambda(\rho) = \int_0^\infty \exp\left(-\rho y^{-1}\right)\, q(y)\, dy
    \sim  b \, \rho^{-(\alpha-1)}, \qquad \textrm{as } \rho\to\infty,
\]
with $b:=a \, \Gamma(\alpha-1)$. 
Equation
\[
   p=\Lambda(e^r-1) \sim b \, (e^r-1)^{1-\alpha}
\]
yields
\begin{eqnarray*}
   e^r-1 &\sim&  \left( \frac{p}{b}\right)^{-1/(\alpha-1)},
\\
  g(p) &=& r \sim \frac{|\ln p|}{\alpha-1}, \qquad \textrm{as } p\to 0.
\end{eqnarray*}
We conclude that
\begin{equation}\label{asymp_pt}
   \E \MR \sim n\ \frac{\ln n}{\alpha-1}, \qquad \textrm{as } n\to \infty.
\end{equation}

\subsection{Uniform $\gamma$}

Let $\gamma$ be uniformly distributed on $[0,1]$. Then

\[
   \Lambda(\rho) = \int_0^1 \exp(-\rho y^{-1}) dy 
   \sim  \exp\left(-\rho\right)/\rho, \qquad \textrm{as } \rho\to\infty.
\]

Solving equation $\Lambda(e^r-1)=p$,
we arrive at
\[
   e^r-1 = |\log p| (1+o(1)),
\]
which yields
\begin{equation}\label{asymp_u}
  g(p)=r \sim \log|\log p|,
  \qquad\textrm{as } p\to 0.
\end{equation}
Finally, from \eqref{e:main} it follows that
\[
  \E \MR \sim n \, \log\log n,
\]
which is the same as for the case of constant $\gamma\equiv 1$. In other words, for the uniform case only the largest possible values count.
\bigskip


\section{Some numerical aspects}

For practical applications, it is important to understand at which range of $n$ one may trust to the general asymptotics \eqref{e:main} and, in special cases, to further expansions like \eqref{asymp_c},\eqref{asymp_e},\eqref{asymp_pt}, or \eqref{asymp_u}. 
We present some answers to this problem obtained by numerical simulations.

Notice that in the matter of numerical simulations, one has to distinguish the annealed and the quenched cases.
Let $\mathcal{P}$ be the common distribution of $\gamma_{ij}$. In the annealed setting for the
assignment problem we deal with the double expectation 
\[
    \E \MR = \mathbb{E}_{\substack{ h\sim\mathrm{Exp}(1)\\\gamma\sim\mathcal{P}}}\,\,\MR.
\]

The simulation was performed as follows: for each $n$ from some set of positive integers  $I$ 
we generated  $m$ independent cost matrices $(w_{ij})_{1\le i,j\le n}$ and for each matrix solved the assignment problem exactly via Hungarian algorithm \cite{Hungarian}. 
When $m$ is large, the empirical mean of the optimal matching results provides a sufficiently sharp estimate of $\E \MR$. 





Then we compared the results obtained by Hungarian method  with our theoretical ones 
obtained from \eqref{e:g} and from further asymptotic expansions.

As a typical example, consider the results of numerical simulations in the  case of standard exponential $\gamma$ ran on $I = \{10 k \mid k = 1, \dots, 100\}$ and $m = 300$.




The estimate given by \eqref{e:g} matches the empirical mean of simulated data almost perfectly: the maximal relative error 4.53\% is attained at $n=10$ and quickly goes down to 0.25\% at $n=1000$.
Notice that equation \eqref{e:g} was solved through symbolical decomposing of the function 
$\Lambda(\cdot)$ up to the fourth asymptotic term.

On the contrary, using the ``beautiful"  one-term asymptotics $2 n \log{\log{n}}$ from \eqref{asymp_e}
does not provide the satisfactory results: in this way, the  maximum's expectation is overestimated by
$25$--$35\%$ with $33.4\%$ at $n=1000$. 



Other examples described in Section \ref{sec:special_cases}  provide  similar results. 
The common conclusion is that in the range of $n$ up to $1000$ the  numerical solution of \eqref{e:g} works well for approximation  of $\E \MR$ but the approximation via the main asymptotic term of $n g(1/n)$,  as  in 
\eqref{asymp_c},\eqref{asymp_e},\eqref{asymp_pt}, or \eqref{asymp_u}
is not precise enough. In general, the excessive relative error could be reduced by deriving few more
asymptotic terms for  $g(\cdot)$ where it is possible.

\medskip


The quenched setting for assignment problem  consists in sampling a matrix $\gamma$ 
from $\mathcal{P}$ once and forever  and then sampling matrices $h$ many times in order to evaluate
\[
    \E \MR = \mathbb{E}_{ h\sim\mathrm{Exp}(1)}\,\,\MR\quad\text{ for some }\, \gamma\,\text{ sampled from } \mathcal{P}.
\]
The numerical results obtained in the quenched setting are quite similar to those obtained in the annealed setting. 

\bibliographystyle{plain}

\end{document}